% Typeset with LaTeX format
%Submitted to Ars Combinatorica March 2009; resubmitted January 2010
%Final version sent July 19, 2010

\documentclass[10pt]{amsart}
\usepackage{amstext}
\usepackage{amssymb,latexsym}

\def \diam {\text{diam}}

%These page parameters are specified for the final file by Ars Combinatoria
%Would you please send me a new copy: single spaced, in 10 point Times or Computer Modern font, 4.5 x 7.125 inches (11.4 x 18.1 cm) in text size. I use your copy as camera-ready for publication. A .pdf or .ps file by email will suffice.

\setlength{\textheight}{7.125in}
\setlength{\textwidth}{4.5in}

\newtheorem{thm}{Theorem}[section]
\newtheorem*{thmM}{Main Theorem}

\newtheorem{remark}[thm]{Remark}
\newtheorem{lemma}[thm]{Lemma}

\begin{document}
\title{The Radio Number of $C_{n} \square C_{n}$}
        \date{\today}
\author{Marc Morris-Rivera}
\address{Department of Mathematics, California State University Sacramento, Sacramento, CA} \email{mmorris@clunet.edu}
\author{Maggy Tomova}
\address{Department of Mathematics, University of Iowa, 14 MacLean Hall, Iowa City, IA 52242-1419} \email{mtomova@math.uiowa.edu}
\author{Cindy Wyels}
\address{Department of Mathematics, California State University Channel Islands, 1 University Dr., Camarillo, CA  93012} \email{cindy.wyels@csuci.edu}
\author{Aaron Yeager}
\address{Mathematics Department, University of Missouri, Columbia, MO 65211}
\email{aaronyeager@hotmail.com}

\keywords{radio number, radio labeling, Cartesian product, cycle}

%This is supposed to change the running header when the authors list is too long.
\renewcommand{\shortauthors}{Morris-Rivera, Tomova, Wyels, Yeager}

\maketitle
\begin{abstract}
Radio labeling is a variation of Hale's channel assignment problem,
in which one seeks to assign positive integers to the vertices of a
graph $G$ subject to certain constraints involving the distances
between the vertices. Specifically, a radio labeling of a connected
graph $G$ is a function $c:V(G) \rightarrow  \mathbb Z_+$
%\{1, 2, \dots \}$
such that
$$d(u,v)+|c(u)-c(v)|\geq 1+\text{diam}(G)$$ for every two distinct
vertices $u$ and $v$ of $G$ (where $d(u,v)$ is the distance between $u$ and $v$).  The span of a
radio labeling is the maximum integer assigned to a vertex. The
radio number of a graph $G$ is the minimum span, taken over all
radio labelings of $G$. This paper establishes the radio number of
the Cartesian product of a cycle graph with itself (i.e. of
$C_n\square C_n$.)

\vspace{.1in}\noindent \textbf{2000 AMS Subject Classification:} 05C78 (05C15, 05C38)
\end{abstract}

\section{Introduction}
Radio labeling is derived from the assignment of radio frequencies
(channels) to a set of transmitters.  The frequencies assigned
depend on the geographical distance between the transmitters: the
closer two transmitters are, the greater the potential for
interference between their signals. Thus when the distance between
two transmitters is small, the difference in the frequencies
assigned must be relatively large, whereas two transmitters at a
large distance may be assigned frequencies with a small difference.

The use of graphs to model the ``channel assignment" problem was
first proposed by Hale in 1980 \cite{Hale}; Chartrand et al
introduced the variation known as radio labeling in 2001
\cite{CEHZ}.

In the graph model of the channel assignment problem, the vertices
correspond to the transmitters, and graph distance plays the role of
geographical distance. We assume all graphs are connected and
simple. The \emph{distance} between two vertices $u$ and $v$ of a
graph $G$, $d(u,v)$, is the length of a shortest path between $u$
and $v$. The \emph{diameter} of $G$, $\diam(G)$, is the maximum
distance, taken over all pairs of vertices of $G$. A \emph{radio
labeling} of a graph $G$ is then defined to be a function $c:V(G)
\to \mathbb Z_+$ satisfying
$$d(u,v)+|c(u)-c(v)|\geq 1+\text{diam}(G)$$
for all distinct pairs of vertices $u,v \in V(G)$. The \emph{span}
of a radio labeling $c$ is the maximum integer assigned by $c$. The
\emph{radio number} of a graph $G$, $rn(G)$, is the minumum span,
taken over all radio labelings of $G$\footnote{We use the convention, established in \cite{CEHZ}, that the co-domain of a radio labeling is $\mathbb Z_+ = \{1, 2, \dots \}$. Some authors use $\{0, 1, 2, \dots \}$ as the co-domain; radio numbers specified using the non-negative integers as co-domain are one less than those determined using the positive integers.}.

%Some authors let $\textbf{N}$ include $0$, with the result that radio numbers using this definition are one less than radio numbers determined using the positive integers.}.

We focus on Cartesian products of cycles. We remind the reader that
the cycle graph of order $n$, $C_n$, may be represented with vertex
set $V(C_n) = \{v_1, v_2, \dots , v_n\}$ and edge set
$E(C_n)=\{v_1v_2, v_2v_3, \dots , v_{n-1}v_n,v_nv_1\}$.
The diameter of $C_n$ is $\left\lfloor \frac n2 \right\rfloor$.

The Cartesian product of two graphs $G$ and $H$ has vertex set $V(G
\square H) = V(G) \times V(H) = \{(g,h)\,|\, g \in V(G) \text{ and
}h\in V(H)\}$. The edges of $G\square H$ consist of those pairs of
vertices $\{(g,h),\,(g',h')\}$ satisfying $g=g'$ and $h$ is adjacent
to $h'$ in $H$ or $h=h'$ and $g$ is adjacent to $g'$ in $G$. We note
that $C_n \square C_n$ has $n^2$ vertices, and $\diam(C_n \square
C_n)=2\left\lfloor \frac n2 \right\rfloor$.

As Liu and Zhu write, ``It is surprising that determining the radio number seems a difficult problem even for some basic families of graphs." \cite{LiuZhu} In fact, as of this writing, the only families of graphs for which the radio number is known are paths and cycles \cite {LiuZhu} and the squares of paths and cycles \cite{LiuXieP, LiuXieC}; wheels and gears \cite{gears}, and some generalized prisms \cite{prisms}. Meanwhile, bounds for the radio numbers of trees \cite{Liu}, ladders \cite{ladders}, and square grids \cite{grids} have been identified, while the radio number of cubes of the cycles $C_n^3$ for $n\leq 20$ and $n \equiv 0$, $2$, or $4 \pmod 6$ is known \cite{SR}.

The main result of this paper establishes the radio number of the Cartesian product of the $n$-cycle with itself:
\begin{thmM}\label{thmM:main} Let $n$ be a nonnegative integer. Then
\[
rn(C_n \square C_n)=
\begin{cases}
\frac{n^2-2}2(k+2)+2, &\text{for $n=2k$,}\\
\frac{n^2-1}2(k+1)+1, &\text{for $n=2k+1$.}
\end{cases}
\]
\end{thmM}
This is the first fully determined radio number for a family of graphs that is itself a Cartesian product of graphs. As such, it provides evidence for use in considering an interesting question: how is the radio number of a graph product related to the radio numbers of the factors? It is also possible that the labeling algorithms used to establish an upper bound for the radio number may be adapted to serve the same purpose for other toroidal graphs.

We prove the main theorem in two steps. First we provide the lower bound for $rn(C_n \square C_n)$ in Section 2. In Section 3, we define a radio labeling of $C_n \square C_n$; the span of this labeling is equal to the lower bound, thus establishing the radio number of $C_n \square C_n$. Finally, we return to the question of the relationship of $rn(G\square H)$ to $rn(G)$ and $rn(H)$ in Section 4, by examining $rn(C_n \square C_n)$ and $rn(C_n)^2$, as well as $rn(K_m \square K_n)$ and $rn(K_m)\cdot rn(K_n)$.

\section{Lower Bound}
The lower bound for $rn(C_n\square C_n)$ is reached in three steps.
First we examine the maximum possible sum of the pairwise distances
between any three vertices of $C_n\square C_n$. We use this maximum sum to
establish a minimum possible ``gap" between the $i^{th}$ and
$(i+2)^{nd}$ largest labels. Using 1 for the smallest label and
taking the size of the gap into account then provides a lower bound
for the span of any labeling.

We provide the details of this approach for $C_{2k}\square C_{2k}$
in Lemmas \ref{distance even} and \ref{Gap lemma even} and Theorem
\ref{lowerbound2k}. As the logic of the proofs of the corresponding
results for $C_{2k+1}\square C_{2k+1}$ is identical, we leave the
details of Lemma \ref{distance odd}, Lemma \ref{Gap lemma odd}, and
Theorem \ref{lowerbound2k+1} to the reader.
%***********************distance lemma for C_{2k}******************************
\begin{lemma}\label{distance even} Let $u,v,w \in V(C_{2k} \square C_{2k})$. Then
$d(u,v)+d(v,w)+d(u,w)\leq 2\,\diam(C_{2k}\square C_{2k})$.
\end{lemma}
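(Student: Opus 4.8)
The plan is to reduce this to a one–dimensional statement about $C_{2k}$ and then dispatch it with a short case analysis on arc lengths. First I would recall that distance in a Cartesian product is additive over the two factors: for vertices $(a_1,a_2)$ and $(b_1,b_2)$ of $C_{2k}\square C_{2k}$ one has $d\bigl((a_1,a_2),(b_1,b_2)\bigr)=d(a_1,b_1)+d(a_2,b_2)$, where the distances on the right are taken in $C_{2k}$. Writing $u=(u_1,u_2)$, $v=(v_1,v_2)$, $w=(w_1,w_2)$ and regrouping by coordinate, the quantity $d(u,v)+d(v,w)+d(u,w)$ then equals
\[
\bigl(d(u_1,v_1)+d(v_1,w_1)+d(u_1,w_1)\bigr)+\bigl(d(u_2,v_2)+d(v_2,w_2)+d(u_2,w_2)\bigr),
\]
with all distances now measured in $C_{2k}$. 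Since $\diam(C_{2k}\square C_{2k})=2k$, it suffices to show that for any three vertices $x,y,z$ of $C_{2k}$ we have $d(x,y)+d(y,z)+d(x,z)\le 2k$; summing this bound over the two coordinates yields $4k = 2\diam(C_{2k}\square C_{2k})$, as wanted.

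To prove the one–dimensional inequality, I would let the vertices $x,y,z$ cut $C_{2k}$ into three arcs of lengths $p,q,r$, so that $p+q+r=2k$ and the three pairwise distances are $\min(p,2k-p)$, $\min(q,2k-q)$, $\min(r,2k-r)$ in some order. The key observation is that, since $p+q+r = 2k$, at most one of $p,q,r$ can exceed $k$. If none of them does, each minimum equals the corresponding arc length and the sum of distances is exactly $p+q+r = 2k$. If exactly one of them, say $p$, exceeds $k$, then $q+r = 2k-p < k$, so $q,r < k$ as well, and the sum of distances is $(2k-p)+q+r = 2(2k-p) < 2k$. In either case the sum is at most $2k$, completing the argument.

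I don't expect a serious obstacle here; the two points that require care are the coordinatewise additivity of distance in $C_{2k}\square C_{2k}$ — which is what lets the three-vertex problem split into two independent cycle problems — and the elementary fact that two of the three arcs cannot both be longer than $k$. The corresponding statement for $C_{2k+1}\square C_{2k+1}$ (Lemma \ref{distance odd}) would follow by the same reduction, with the parity bookkeeping on the arc lengths adjusted accordingly.
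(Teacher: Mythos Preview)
Your argument is correct and follows essentially the same route as the paper: decompose the product distance coordinatewise, reduce to the one-dimensional claim that any three vertices of $C_{2k}$ have pairwise distance sum at most $2k$, and then sum over the two coordinates to get $4k=2\,\diam(C_{2k}\square C_{2k})$. The only difference is that the paper dispatches the cycle inequality in a single sentence (``one never need take more steps than those necessary to completely traverse $C_{2k}$''), whereas you supply the explicit arc-length case analysis; your version is more detailed but not a different approach.
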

\begin{proof} Express $u$, $v$, and $w$ via their component vertices,
i.e.,as $u=(x_1,y_1)$, $v=(x_2,y_2)$, and $w=(x_3,y_3)$, where
$x_i$ and $y_i$, $i = 1, 2, 3$ are all vertices of $C_{2k}$.  Then
$$\begin{aligned}
d(u,&v)+d(v,w)+d(u,w)\\
&=d\big((x_1,y_1),(x_2,y_2)\big)+d\big((x_2,y_2),(x_3,y_3)\big)+d\big((x_1,y_1),(x_3,y_3)\big)\\
&=d(x_1,x_2)+d(x_2,x_3)+d(x_1,x_3)+d(y_1,y_2)+d(y_2,y_3)+d(y_1,y_3).
\end{aligned}
$$
In taking shortest paths between $x_1$, $x_2$, and $x_3$ (all in $C_{2k}$),
one never need take more steps than those necessary to completely traverse $C_{2k}$, i.e.
$$d(x_1,x_2)+d(x_2,x_3)+d(x_1,x_3)\leq 2k.
$$
The same is true of the sum of the pairwise distances between
vertices $y_1$, $y_2$, and $y_3$. Thus
$$d(x_1,x_2)+d(x_2,x_3)+d(x_1,x_3)+d(y_1,y_2)+d(y_2,y_3)+d(y_1,y_3)\leq 4k.$$
As $4k =2\,\diam(C_{2k}\square C_{2k})$, this establishes the lemma.
\end{proof}

We use this maximum possible sum of the pairwise distances between
three vertices of $C_{2k}\square C_{2k}$ together with the radio
condition to determine the minimum distance between every other
label (arranged in increasing order) in a radio labeling of
$C_{2k}\square C_{2k}$.
%*************************Gap Lemma C2k***************************************%
\begin{lemma}\label{Gap lemma even}
Let $c$ be a radio labeling of $C_{2k} \square C_{2k}$. Then for any three vertices
$u,v,w \in V(C_{2k} \square C_{2k})$ satisfying $c(u)< c(v) < c(w)$, we have
$c(w)-c(u) \geq k+2$.
\end{lemma}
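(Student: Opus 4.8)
The plan is to apply the radio condition to all three pairs among $u$, $v$, $w$ and add the resulting inequalities, then invoke Lemma \ref{distance even} to eliminate the distance terms. Since $c(u) < c(v) < c(w)$, the three radio inequalities read
\[
d(u,v) + \big(c(v)-c(u)\big) \geq 1+\diam(C_{2k}\square C_{2k}),
\]
\[
d(v,w) + \big(c(w)-c(v)\big) \geq 1+\diam(C_{2k}\square C_{2k}),
\]
\[
d(u,w) + \big(c(w)-c(u)\big) \geq 1+\diam(C_{2k}\square C_{2k}).
\]
Adding these three, the label differences collapse to $2\big(c(w)-c(u)\big)$ (the $c(v)$ terms cancel), giving
\[
d(u,v)+d(v,w)+d(u,w) + 2\big(c(w)-c(u)\big) \geq 3+3\,\diam(C_{2k}\square C_{2k}).
\]

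Next I would substitute the bound $d(u,v)+d(v,w)+d(u,w) \leq 2\,\diam(C_{2k}\square C_{2k})$ from Lemma \ref{distance even}, which yields
\[
2\big(c(w)-c(u)\big) \geq 3+\diam(C_{2k}\square C_{2k}) = 3+2k,
\]
so $c(w)-c(u) \geq k + \tfrac32$. Finally, since $c$ takes integer values, $c(w)-c(u)$ is an integer, so the bound $k+\tfrac32$ forces $c(w)-c(u) \geq k+2$, completing the argument.

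The computation itself is routine; the only real content is the idea of summing the three pairwise radio conditions so that the middle label $c(v)$ disappears and one is left with exactly $2(c(w)-c(u))$, and then feeding in the geometric Lemma \ref{distance even}. The one subtlety worth stating explicitly is the integrality step at the end: without it one would only get $c(w)-c(u) \geq k+\tfrac32$, and the gap of $k+2$ — which is what drives the lower bound in Theorem \ref{lowerbound2k} — depends on rounding up. I do not expect any genuine obstacle here; the analogous odd case (Lemma \ref{Gap lemma odd}) should follow the same three lines with $\diam(C_{2k+1}\square C_{2k+1}) = 2k$ in place of $2k$ and the integrality rounding producing the appropriate gap.
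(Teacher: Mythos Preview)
Your proof is correct and follows essentially the same approach as the paper's: sum the three pairwise radio inequalities so that $c(v)$ cancels, apply Lemma~\ref{distance even} to bound the total distance by $2\,\diam(C_{2k}\square C_{2k})=4k$, and then round up using integrality. The paper's argument is line-for-line the same.
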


\begin{proof}
Since $c(u)$, $c(v)$ and $c(w)$ are radio labels,
$$\begin{aligned}
d(u,v) + |c(v) - c(u)| &\geq 1 + \diam(C_{2k} \square C_{2k}),\\
d(v,w) + |c(w) - c(v)| &\geq 1 + \diam(C_{2k} \square C_{2k}),\text{ and}\\
d(u,w) + |c(w) - c(u)| &\geq 1 + \diam(C_{2k} \square C_{2k}).
\end{aligned}
$$
Summing these inequalities yields
$$\begin{aligned}
d(u,v) + d(v,w) &+ d(u,w) + c(v) - c(u) + c(w) - c(v) + c(w) - c(u) \\
&\geq 3 + 3\,\diam(C_{2k} \square C_{2k}).
\end{aligned}
$$
Furthermore, by Lemma
\ref{distance even}, $d(u,w) + d(v,w) + d(u,w) \leq 2\,\diam(C_{2k}
\square C_{2k})$, so we have
$$2\,\diam(C_{2k} \square C_{2k}) + 2c(w) - 2c(u) \geq 3 + 3\,\diam(C_{2k} \square C_{2k}).$$
As $\diam(C_{2k} \square C_{2k}) = 2k$, it follows that
$$ \begin{aligned}
2(2k) + 2c(w) - 2c(u) &\geq 3 + 3(2k) \\
c(w) - c(u) &\geq \frac{3+2k}{2} = \frac{3}{2} + k.
\end{aligned}
$$
As $c(w) - c(u)$ is an integer, we may conclude that $c(w) - c(u) \geq 2 + k$.
\end{proof}

Knowledge of the size of the minimum gap allowable between the values of every other label makes it possible to calculate the minimum possible span of a radio labeling of $C_{2k}\square C_{2k}$.

%***********************************Lower Bound 2k*****************************%
\begin{thm}For $n=2k$, $rn(C_n \square C_n)\geq \frac{n^2-2}2(k+2)+2$.
 \label{lowerbound2k}
\end{thm}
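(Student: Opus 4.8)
The plan is to combine the "gap" estimate of Lemma~\ref{Gap lemma even} with a counting argument over the $n^2$ vertices. First note that $c$ is injective: if $c(u)=c(v)$ for distinct $u,v$, then the radio condition forces $d(u,v)\geq 1+\diam(C_{2k}\square C_{2k})$, which is impossible. So we may list the vertices of $C_{2k}\square C_{2k}$ as $u_1,u_2,\dots,u_{n^2}$ so that
\[
c(u_1)<c(u_2)<\cdots<c(u_{n^2}),
\]
and since these are positive integers we have $c(u_1)\geq 1$ and $c(u_{i+1})\geq c(u_i)+1$ for every $i$; in particular $c(u_2)\geq 2$.

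Next I would apply Lemma~\ref{Gap lemma even} to each consecutive triple $u_i,u_{i+1},u_{i+2}$ (which satisfies $c(u_i)<c(u_{i+1})<c(u_{i+2})$), obtaining $c(u_{i+2})-c(u_i)\geq k+2$ for all $i$ with $1\leq i\leq n^2-2$. Then telescope along the even-indexed sublist $u_2,u_4,u_6,\dots,u_{n^2}$. Because $n=2k$, we have $n^2=4k^2$, which is even, so this sublist has $n^2/2=2k^2$ terms and hence $n^2/2-1=2k^2-1$ consecutive gaps, each of size at least $k+2$. Summing gives
\[
c(u_{n^2})-c(u_2)\;\geq\;(2k^2-1)(k+2).
\]
Combining with $c(u_2)\geq 2$ yields
\[
rn(C_n\square C_n)\;\geq\;c(u_{n^2})\;\geq\;(2k^2-1)(k+2)+2,
\]
and since $(2k^2-1)(k+2)+2=\frac{4k^2-2}{2}(k+2)+2=\frac{n^2-2}{2}(k+2)+2$, the bound follows.

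The argument is essentially bookkeeping once Lemma~\ref{Gap lemma even} is in hand, so the only point that needs care is the index accounting: one must use the parity of $n^2$ to count exactly $2k^2-1$ applications of the gap estimate along the even sublist, and one must not lose the additive constant $2$ coming from the fact that $c(u_1)$ and $c(u_2)$ are distinct positive integers rather than both equal to $1$. (Telescoping along the odd sublist $u_1,u_3,\dots,u_{n^2-1}$ gives only $c(u_{n^2-1})-c(u_1)\geq(2k^2-1)(k+2)$, hence $c(u_{n^2})\geq(2k^2-1)(k+2)+2$ as well, so either chain works; the even one is slightly cleaner.) I expect no genuine obstacle here — the real work of the lower bound is already done in Lemmas~\ref{distance even} and~\ref{Gap lemma even}, and the matching labeling construction in Section~3 is where the difficulty lies.
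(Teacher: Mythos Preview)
Your proposal is correct and follows essentially the same approach as the paper: order the vertices by label, use $c(u_1)\geq 1$, $c(u_2)\geq 2$, and telescope the gap estimate $c(u_{i+2})-c(u_i)\geq k+2$ from Lemma~\ref{Gap lemma even} up to the top vertex $u_{n^2}$ (noting $n^2$ is even). Your write-up is in fact slightly more careful than the paper's in justifying injectivity and in counting the $2k^2-1$ gaps explicitly.
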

\begin{proof}
Let $c$ be a radio labeling of $C_{2k} \square C_{2k}$. Rename the
vertices of $C_{2k} \square C_{2k}$ using the set $\{x_1, x_2, \dots
, x_{(2k)^2}\}$ so that $c(x_i) < c(x_j)$ whenever $i < j$. Consider
the lowest possible values of $c(x_i)$ for each $i$. We have $c(x_1)
\geq 1$ and $c(x_2) \geq 2$. From Lemma \ref{Gap lemma even} we know
$c(x_3) \geq c(x_1) +k+2$, and in general,
\[
c(x_i)\geq
       \begin{cases}
          1+\frac {i-1}2(k+2), &\text{ when $i$ is odd}\\
          2+\frac{i-2}2(k+2), &\text{ when $i$ is even.}
        \end{cases}
\]
Thus $rn(C_{2k} \square C_{2k})\geq \text{span}(c)=c(x_{(2k)^2})
\geq 2+\frac{(2k)^2-2}2(k+2)=\frac{n^2-2}2(k+2)+2$.
%Since the vertex set of $C_{2k} \square C_{2k}$ has $4k^2$ elements, we need $\frac{4k^2}{2}=2k^2$ pairs of labels to cover all vertices. By Lemma \ref{Gap lemma even}, we may use at most two values from any interval of length $k+2$ to radio label $C_{2k} \square C_{2k}$. Thus no radio labeling may have a span lower than that resulting from assigning 1 for the smallest label and using $2k^2-1$ consecutive intervals of length $k+2$ for the first $2k^2-1$ pairs of labels.  These $2k^2-1$ intervals then comprise the integers from $1$ to $(2k^2-1)(k+2)$.  The lowest values available for the last pair of labels are thus $(2k^2-1)(k+2)+1$ and $(2k^2-1)(k+2)+2$. Therefore the lowest possible span of any radio labeling is $(2k^2-1)(k+2)+2$. This implies that $rn(C_{2k} \square C_{2k})\geq 2k^3+4k^2-k$.
\end{proof}

A lower bound for the radio number of $C_{2k+1}\square C_{2k+1}$ may be obtained in much
the same way as the lower bound for $C_{2k}\square C_{2k}$.
%***********************distance lemma for C_{2k+1}******************************
\begin{lemma}\label{distance odd} Let $u,v,w \in V(C_{2k+1} \square C_{2k+1})$. Then
$$d(u,v)+d(v,w)+d(u,w)\leq 2\,\diam(C_{2k+1}\square C_{2k+1})+2.
$$
\end{lemma}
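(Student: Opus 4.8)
The plan is to mirror the proof of Lemma \ref{distance even} almost verbatim, tracking carefully the one place where the odd case differs. Writing $u=(x_1,y_1)$, $v=(x_2,y_2)$, $w=(x_3,y_3)$ with all coordinates in $C_{2k+1}$, the same computation as before gives
\[
d(u,v)+d(v,w)+d(u,w)=\sum_{\{i,j\}}d(x_i,x_j)+\sum_{\{i,j\}}d(y_i,y_j),
\]
so it suffices to bound the sum of pairwise distances among three vertices of $C_{2k+1}$.

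The key step is the claim that for any three vertices $a,b,c$ of $C_{2k+1}$ one has $d(a,b)+d(b,c)+d(a,c)\le 2k+1$. In the even cycle $C_{2k}$ the bound $2k$ came from the observation that the three shortest paths together never need to traverse the whole cycle more than once; in $C_{2k+1}$ the same reasoning yields that the sum is at most $2k+1$, since the three pairwise arcs, chosen as short as possible, still fit within a single traversal of the $(2k+1)$-cycle (an arc and its complement have lengths summing to $2k+1$, and the shorter is chosen, so the worst case is when the three points are spread as evenly as possible, giving roughly $\tfrac{2}{3}(2k+1)$ per pair, comfortably under $2k+1$; more simply, one of the three shortest arcs can always be taken to be the ``long way around'' complement of the union of the other two). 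Granting this, we get
\[
d(u,v)+d(v,w)+d(u,w)\le (2k+1)+(2k+1)=2(2k+1).
\]
Since $\diam(C_{2k+1}\square C_{2k+1})=2\lfloor (2k+1)/2\rfloor=2k$, we have $2(2k+1)=4k+2=2\,\diam(C_{2k+1}\square C_{2k+1})+2$, which is exactly the asserted bound.

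The main obstacle — really the only nontrivial point — is justifying $d(a,b)+d(b,c)+d(a,c)\le 2k+1$ cleanly for $C_{2k+1}$, since here the extremal configuration does not make each pairwise distance equal to the diameter (as happens informally in the even case), and one must argue that among the three arcs cut out by $a,b,c$ on the cycle, choosing each pairwise distance as the shorter arc forces the total to be at most the full cycle length $2k+1$. I would handle this by placing the three points on the cycle, letting the three consecutive gaps be $g_1,g_2,g_3\ge 0$ with $g_1+g_2+g_3=2k+1$, noting $d$ between two of the points is $\min$ of the connecting arc and its complement, and then checking that $\min(g_1,g_2+g_3)+\min(g_2,g_1+g_3)+\min(g_3,g_1+g_2)\le g_1+g_2+g_3$, which follows since at most one of $g_1,g_2,g_3$ can exceed $k$ (as their sum is $2k+1$) and for that one, say $g_1>k$, the first term contributes $g_2+g_3$ rather than $g_1$. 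Everything else is the same bookkeeping as in Lemma \ref{distance even}, so the write-up is short; as the excerpt itself notes, the analogous details for the odd case are routine once this distance bound is in hand.
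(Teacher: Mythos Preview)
Your proposal is correct and follows essentially the same route as the paper: decompose into coordinates, bound the sum of three pairwise distances in $C_{2k+1}$ by $2k+1$ (``once around the cycle''), then double and rewrite $2(2k+1)$ as $2\,\diam(C_{2k+1}\square C_{2k+1})+2$. Your gap argument with $g_1+g_2+g_3=2k+1$ is a more careful justification of the cycle bound than the paper gives, but the overall strategy is identical.
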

\begin{proof} As in the proof of Lemma \ref{distance even}, we write the vertices
of $C_{2k+1} \square C_{2k+1}$ via their components:
$u=(x_1,y_1)$, $v=(x_2,y_2)$, and $w=(x_3,y_3)$.  Here, however, the
sum $d(x_1,x_2)+d(x_2,x_3)+d(x_1,x_3)$ may be as much as $2k+1$
(i.e.,once around the cycle). So
$$\begin{aligned}
d(u,&v)+d(v,w)+d(u,w)\\
&=d\big((x_1,y_1),(x_2,y_2)\big)+d\big((x_2,y_2),(x_3,y_3)\big)+d\big((x_1,y_1),(x_3,y_3)\big)\\
&=d(x_1,x_2)+d(x_2,x_3)+d(x_1,x_3)+d(y_1,y_2)+d(y_2,y_3)+d(y_1,y_3) \\
&\leq 2(2k+1) \\
&=2\,\diam(C_{2k+1}\square C_{2k+1})+2.
\end{aligned}
$$
\end{proof}

%************************************Gap Lemma 2k+1****************************%
\begin{lemma}\label{Gap lemma odd}
Let $c$ be a radio labeling of $C_{2k+1} \square C_{2k+1}$. Then for any three vertices $u,v,w \in V(C_{2k+1} \square C_{2k+1})$ satisfying $c(u)< c(v) < c(w)$, we have $c(w)-c(u) \geq k+1$.
\end{lemma}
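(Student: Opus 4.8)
The plan is to mimic the proof of Lemma \ref{Gap lemma even} essentially verbatim, substituting the odd-case ingredients. Given three vertices $u,v,w$ with $c(u)<c(v)<c(w)$, I would write down the three radio-condition inequalities for the pairs $(u,v)$, $(v,w)$, and $(u,w)$, each of the form $d(\cdot,\cdot)+|c(\cdot)-c(\cdot)|\geq 1+\diam(C_{2k+1}\square C_{2k+1})$, and then sum them. Because the labels are ordered, the absolute values unfold as $c(v)-c(u)$, $c(w)-c(v)$, and $c(w)-c(u)$, whose sum telescopes to $2c(w)-2c(u)$.

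Next I would invoke Lemma \ref{distance odd} to bound the sum of the three pairwise distances by $2\,\diam(C_{2k+1}\square C_{2k+1})+2$, and substitute $\diam(C_{2k+1}\square C_{2k+1})=2k$. This yields
\[
2(2k)+2+2c(w)-2c(u)\geq 3+3(2k),
\]
which simplifies to $2c(w)-2c(u)\geq 2k+1$, i.e. $c(w)-c(u)\geq k+\tfrac12$. Since $c(w)-c(u)$ is an integer, rounding up gives $c(w)-c(u)\geq k+1$, as claimed.

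There is essentially no obstacle here: the only substantive difference from the even case is the extra $+2$ in Lemma \ref{distance odd} (reflecting that three points on an odd cycle can have pairwise distances summing to a full traversal, which is one more than twice the diameter), and this extra slack is exactly what drops the gap bound from $k+2$ to $k+1$. The one point to be careful about is the ceiling step: here $k+\tfrac12$ rounds up to $k+1$ rather than $k+2$, so the conclusion is genuinely weaker than in the even case — but it is still the best possible bound obtainable from this three-vertex averaging argument. I would keep the write-up short, perhaps even just noting that the argument is identical to that of Lemma \ref{Gap lemma even} with Lemma \ref{distance odd} in place of Lemma \ref{distance even}, since the authors have already signaled that they leave such details to the reader.
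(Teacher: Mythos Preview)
Your proposal is correct and matches the paper's own approach exactly: the authors state that the proof is analogous to that of Lemma~\ref{Gap lemma even}, with the substitution of $2\,\diam(C_{2k+1}\square C_{2k+1})+2=4k+2$ (from Lemma~\ref{distance odd}) for $2\,\diam(C_{2k}\square C_{2k})=4k$. Your arithmetic and the ceiling step are both correct.
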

The proof of Lemma \ref{Gap lemma odd} is analogous to that of Lemma \ref{Gap lemma even}, with
the substitution of $2\,\diam(C_{2k+1}\square C_{2k+1})+2=4k+2$ for $2\,\diam(C_{2k}\square C_{2k})=4k$.

%******************************************Lower Bound 2k+1***********************%
\begin{thm}\label{lowerbound2k+1}
For $n=2k+1$, $rn(C_n \square C_n)\geq \frac{n^2-1}2(k+1)+1$.
\end{thm}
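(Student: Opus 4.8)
The plan is to transcribe the argument of Theorem~\ref{lowerbound2k} essentially verbatim, substituting the gap estimate of Lemma~\ref{Gap lemma odd} for that of Lemma~\ref{Gap lemma even}. Fix an arbitrary radio labeling $c$ of $C_{2k+1}\square C_{2k+1}$ and rename its $n^2=(2k+1)^2$ vertices as $x_1,\dots,x_{n^2}$ so that $c(x_i)<c(x_j)$ whenever $i<j$. Since labels are positive integers, $c(x_1)\geq 1$ and $c(x_2)\geq 2$. Applying Lemma~\ref{Gap lemma odd} to the triple $x_i,x_{i+1},x_{i+2}$ (whose labels are strictly increasing by construction) gives $c(x_{i+2})-c(x_i)\geq k+1$ for every admissible $i$.

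Iterating this inequality separately along the odd-indexed and the even-indexed vertices, starting from the base values $c(x_1)\geq 1$ and $c(x_2)\geq 2$, yields
\[
c(x_i)\geq
\begin{cases}
1+\frac{i-1}{2}(k+1), & \text{when $i$ is odd,}\\
2+\frac{i-2}{2}(k+1), & \text{when $i$ is even.}
\end{cases}
\]
The only point at which this argument genuinely departs from the even case is the parity of the top index: because $n=2k+1$ is odd, the number of vertices $n^2=(2k+1)^2$ is also odd, so $x_{n^2}$ falls into the odd-index branch above (whereas in Theorem~\ref{lowerbound2k} the final index is even, which is what produces the ``$+2$'' term there). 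Consequently
\[
rn(C_n\square C_n)\geq\text{span}(c)=c(x_{n^2})\geq 1+\frac{n^2-1}{2}(k+1)=\frac{n^2-1}{2}(k+1)+1,
\]
and since $c$ was an arbitrary radio labeling, the theorem follows.

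I do not anticipate any real obstacle: once Lemmas~\ref{distance odd} and~\ref{Gap lemma odd} are available, the estimate is a routine induction. The only thing requiring care is the bookkeeping with the parity of $n^2$, so that the correct branch of the per-index bound is invoked at $i=n^2$; this is precisely where the even and odd cases must be handled separately rather than treated uniformly.
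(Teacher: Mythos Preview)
Your proposal is correct and follows essentially the same argument as the paper's own proof, which explicitly says it is analogous to Theorem~\ref{lowerbound2k} with $k+1$ substituted for $k+2$ and then writes down the identical per-index bounds and span computation. Your added remark about the parity of $n^2$ forcing the odd-index branch at $i=n^2$ makes explicit a point the paper leaves implicit, but otherwise the two proofs coincide.
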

\begin{proof}
The proof is analogous to that of Theorem \ref{lowerbound2k}, with
the substitution of $k+1$ (from Lemma \ref{Gap lemma odd}) for $k+2$
(from Lemma \ref{Gap lemma even}). Now, for any radio labeling $c$
of $C_{2k+1} \square C_{2k+1}$, we have
\[
c(x_i)\geq
       \begin{cases}
          1+\frac {i-1}2(k+1), &\text{ when $i$ is odd}\\
          2+\frac{i-2}2(k+1), &\text{ when $i$ is even.}
        \end{cases}
\]
As $C_{2k+1} \square C_{2k+1}$ has $(2k+1)^2$ vertices, we conclude
$rn(C_{2k+1} \square C_{2k+1})\geq \text{span}(c)=c(x_{(2k+1)^2})
\geq 1+\frac{(2k+1)^2-1}2(k+1)=\frac{n^2-1}2(k+1)+1$.
\end{proof}

\section{Upper Bound}

Our general approach to establishing the upper bound for $rn(C_n
\square C_n)$ consists of three steps. After some preliminaries,
we define a position function $p:\{0, 1, \dots , n^2-1\} \to V(C_n
\square C_n)$ and argue that $p$ is a bijection. Defining $x_i =
p(i)$ allows us to rename the vertices of $C_n \square C_n$ in what
will be a useful way. Next we give a labeling $c:\{x_0, x_1, \dots ,
x_{n^2-1}\} \to \mathbb Z_+$ for which $c(x_0) < c(x_1) < \cdots <
c(x_{n^2-1})$. We then prove that $c$ is a radio labeling of $C_n
\square C_n$. (The fact that $c(x_i) < c(x_j)$ when $i < j$
simplifies the proof that $c$ is a radio labeling.) It follows that
$rn(C_n \square C_n) \leq \text{span}(c)$.

%We use the next fact to limit the number of vertex pairs for which it
%must be verified that a labeling is a radio labeling.

Recall that any radio labeling $c$ of $G$ must satisfy the radio
condition
$$d(u,v)+|c(u)-c(v)|\geq 1+\text{diam}(G)
$$
for all distinct vertices $u,v \in V(G)$. Once $|c(u)-c(v)| \geq
\text{diam}(G)$, the radio condition is satisfied for $u,v$ and for
any pair of vertices with label difference at least as big as
$|c(u)-c(v)|$. The next remark states this fact precisely, and will
be of use in limiting the number of vertex pairs for which it must
be verified that specific labelings satisfy the radio condition.

\begin{remark}\label{usefact}
Let $c:\{x_0, x_1, \dots , x_{|V(G)|-1}\} \to \mathbb Z_+$ be a labeling
of $G = (V,E)$ satisfying $c(x_0) < c(x_1) < \cdots < c(x_{|V(G)|-1})$.
If $c(x_l)-c(x_k) \geq \diam(G)$ for some $k<l$, then $c$ satisfies the radio condition for all pairs of vertices $x_i, x_j$ with $i\leq k$ and $j \geq l$.
\end{remark}

%Should the difference in two vertices' labels not exceed the diameter of the graph, we will need to calculate the distance between the vertices. We indicate how to calculate distances between vertices of $C_n \square C_n$ below.

In preparation for defining the position function, we employ a common means of representing $C_n$: $V(C_n)=\{v\in\mathbb Z\,|\,0\leq v\leq n-1\}$, with $v,w \in V(C_n)$ adjacent exactly when $v\equiv w \pm 1 \pmod n$. This then gives the expected representation of $V(C_n \square C_n)$ as $\{(v,w)\, |\, 0 \leq v,w \leq n-1\}$. Distance between vertices of $C_n \square C_n$ are calculated as in Remark \ref{distancecalc}.
\begin{remark}\label{distancecalc}
Let $(v_{i_1},w_{i_2}), (v_{j_1},w_{j_2}) \in V(C_n \square C_n)$. Then
$$
\begin{aligned}
d\bigl((v_{i_1},w_{i_2}), &(v_{j_1},w_{j_2})\bigr) \\
&=\min\{|i_1-j_1|,n-|i_1-j_1|\}+\min\{|i_2-j_2|, n-|i_2-j_2|\}.
\end{aligned}
$$
\end{remark}

In Section 2 we establish a lower bound for $rn(C_n \square C_n)$
that depends on the parity of $n$. The upper bound also depends on
the parity of $n$; the next two theorems establish this upper bound.

\vskip 12pt \noindent \textbf{Note}:
All calculations on vertices in pair notation are performed modulo $n$.

\begin{thm} \label{thm:ubeven}
Let $n=2k$. Then $rn(C_n \square C_n) \leq \frac{n^2-2}2(k+2)+2$.
\end{thm}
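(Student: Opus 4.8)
The plan is to construct an explicit radio labeling of $C_{2k} \square C_{2k}$ whose span exactly matches the lower bound of Theorem~\ref{lowerbound2k}, following the three-step outline sketched at the start of Section 3. First I would define a position function $p : \{0, 1, \dots, n^2-1\} \to V(C_n \square C_n)$ that orders the vertices so that consecutive pairs $p(i), p(i+1)$ are at large distance (close to $\diam = 2k$), while pairs two apart, $p(i), p(i+2)$, are at the maximal distance $2k = \diam(C_n \square C_n)$ allowed by Lemma~\ref{distance even}. A natural choice is to sweep through the torus along ``diagonal'' lines: something like $p(i) = (a_i, b_i)$ where the first coordinate advances by roughly $k$ each step and the second coordinate is adjusted to guarantee that after two steps we return to a vertex at distance exactly $2k$ (i.e.\ diametrically opposite in each coordinate, or $k$ away in each). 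The key property to verify is that $p$ is a bijection onto the $n^2$ vertices — this is the bookkeeping heart of the construction and where the parity $n = 2k$ and the specific arithmetic modulo $n$ will matter.

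Once $p$ is shown to be a bijection, I would set $x_i = p(i)$ and define the labeling $c$ to be as tight as the lower-bound argument permits: $c(x_0) = 1$, $c(x_1) = 2$, and then $c(x_{i+2}) = c(x_i) + (k+2)$ for all $i$, so that $c(x_i) = 1 + \frac{i-1}{2}(k+2)$ for odd $i$ and $c(x_i) = 2 + \frac{i-2}{2}(k+2)$ for even $i$. This forces $\text{span}(c) = c(x_{n^2-1}) = 2 + \frac{n^2-2}{2}(k+2)$, matching the target exactly, provided $c$ is genuinely a radio labeling. By construction $c$ is strictly increasing, so Remark~\ref{usefact} applies: since consecutive ``gap-two'' jumps are $k+2 > 2k = \diam$ only when $k < 2$... here I must be careful — $k+2 \le 2k$ for $k \ge 2$, so the remark does not immediately kill all far-apart pairs, and I would instead note that $c(x_i) - c(x_{i-4}) = 2(k+2) \ge 2k = \diam$ for $k \ge 2$ (with small cases $k = 0, 1$ handled separately), so by Remark~\ref{usefact} it suffices to check the radio condition only for pairs $x_i, x_j$ with $|i - j| \le 3$.

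The remaining work is the finite case-check over the four families of close pairs. For $|i-j| = 1$: $|c(x_i) - c(x_j)| = 1$, so I need $d(x_i, x_{i+1}) \ge 2k = \diam$, i.e.\ consecutive vertices in the $p$-order must be \emph{diametrically opposite}; this is the strongest requirement and must be designed into $p$ from the outset. For $|i-j| = 2$: $|c(x_i) - c(x_{i+2})| = k+2$, so I need $d(x_i, x_{i+2}) \ge k = \diam - (k+2) + 2$... let me recompute: need $d + (k+2) \ge 2k+1$, i.e.\ $d(x_i, x_{i+2}) \ge k - 1$; this should be easy to arrange. For $|i-j| = 3$: $|c(x_i) - c(x_{i+3})| = k+3$ or $2k+3$ depending on parity; the smaller value $k+3$ requires $d(x_i, x_{i+3}) \ge k - 2$, again mild. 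So the binding constraint is the distance-$2k$ (antipodal) condition on consecutive pairs, and the design of $p$ must be tailored to meet it while still being a bijection.

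I expect the main obstacle to be simultaneously satisfying three competing demands on the position function: (a) $p$ is a bijection onto all of $V(C_n \square C_n)$; (b) every consecutive pair $p(i), p(i+1)$ is at distance exactly $\diam = 2k$; and (c) every pair $p(i), p(i+2)$ is at distance at least $k-1$ (and similarly for distance-three pairs). Condition (b) is quite rigid — in $C_{2k} \square C_{2k}$ the vertex antipodal to $(a,b)$ is the unique $(a+k, b+k)$ — so the sequence of "displacements" between consecutive vertices is heavily constrained, and threading a Hamiltonian-like path through all $n^2$ vertices under this constraint, with the correct residues mod $n$, will require a carefully chosen formula (likely piecewise, with a main pattern plus correction terms when a coordinate "wraps around"). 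Verifying the bijection and the distance bounds will involve a modest amount of casework on $i \bmod (\text{something})$ and on coordinate wraparound, plus separate treatment of the degenerate small cases $k = 0$ and $k = 1$.
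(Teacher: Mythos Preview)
Your overall plan matches the paper's: define a position bijection $p$, set $x_i=p(i)$, use the labeling $c$ with $c(x_0)=1$, $c(x_1)=2$, $c(x_{i+2})=c(x_i)+(k+2)$, invoke Remark~\ref{usefact} via $c(x_{i+4})-c(x_i)=2(k+2)>2k$ to reduce to pairs with $|i-j|\le 3$, and then do a finite case-check. (Minor slip: your closed-form expressions for $c(x_i)$ have the even and odd cases interchanged, though your recursion is correct.)

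There is, however, a real gap in your analysis of consecutive pairs. You assert $|c(x_{i+1})-c(x_i)|=1$ for all $i$ and conclude that \emph{every} consecutive pair $x_i,x_{i+1}$ must be at distance $2k$, i.e.\ antipodal. But in $C_{2k}\square C_{2k}$ the antipodal map $(a,b)\mapsto(a+k,b+k)$ is an involution, so this would force $x_{i+2}=x_i$ and $p$ could never reach more than two vertices---the design requirement you isolate as ``binding'' is in fact impossible. The resolution is that $c(x_{i+1})-c(x_i)$ is not constant: it equals $1$ when $i$ is even and $k+1$ when $i$ is odd. Hence only the pairs $x_{2m},x_{2m+1}$ need to be antipodal; for the pairs $x_{2m+1},x_{2m+2}$ a distance of $k$ suffices. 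The paper builds $p$ around exactly this alternation, using a $4$-periodic pattern of the form $(r,kr+s)$, $(r+k,kr+s+k)$, $(r,kr+s+k)$, $(r+k,kr+s)$, so that even-to-odd steps jump to the antipode while odd-to-even steps move by $k$ in a single coordinate. With that correction in place, your remaining distance requirements for $|i-j|=2,3$ are essentially right, and the verification proceeds as you outline (the paper also separately checks the handful of pairs that straddle a change in the parameter $r=\lfloor i/2n\rfloor$).
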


\begin{proof}
%As in the proof of Theorem \ref{thm:ubodd} we provide a radio labeling with span $2k^3 +4k^2-k$.
Define $p:\{0,1,\dots , n^2-1\} \to \{(v,w)\, |\, 0 \leq v,w \leq n-1\}$ by
\[
p(i)=
       \begin{cases}
         ( r,kr+s), &\text{ when $i \equiv 0 \pmod 4$,}\\
         ( r+k,kr+s+k), &\text{ when $i \equiv 1 \pmod 4$,}\\
         ( r,kr+s+k), &\text{ when $i \equiv 2 \pmod 4$,}\\
         ( r+k,kr+s), &\text{ when $i \equiv 3 \pmod 4$,}\\
        \end{cases}
\]
$$\text{where } r=\left\lfloor \frac i{2n}\right\rfloor \text{ and } s=\left\lfloor \frac i4\right\rfloor \pmod k.
$$
{\em Claim}: $p$ is a bijection. Consider the following
possibilities for the relationship of the indices $i$ and $j$ for $i \neq j$:
\begin{enumerate}
\item $i \not\equiv j \pmod 4$; $i$ and $j$ have opposite parity,
\item $i \not\equiv j \pmod 4$; $i$ and $j$ have the same parity,
\item $i \equiv j \pmod 4$;  $\left\lfloor \frac i{2n}\right\rfloor \neq\left\lfloor \frac
j{2n}\right\rfloor$, and
\item $i \equiv j \pmod 4$; $\left\lfloor \frac i{2n}\right\rfloor=\left\lfloor \frac
j{2n}\right\rfloor$.
\end{enumerate}

In the first case, the first components of $p(i)$ and $p(j)$ agree
exactly when $\left\lfloor \frac i{2n}\right\rfloor=\left\lfloor \frac
j{2n}\right\rfloor+k$. But $\left\lfloor \frac i{2n}\right\rfloor \leq \left\lfloor \frac
{n^2-1}{2n}\right\rfloor=\frac{n-2}2 < k$, so this is impossible.

Suppose $i \not\equiv j \pmod 4$ and $i$ and $j$ have the same
parity. If $\left\lfloor \frac i{2n}\right\rfloor\neq\left\lfloor \frac
j{2n}\right\rfloor$ then the first components of $p(i)$ and $p(j)$ are not equal.
Should $\left\lfloor \frac i{2n}\right\rfloor =\left\lfloor \frac j{2n}\right\rfloor$,
we assume WLOG that $j > i$ and $j$ is congruent to $1$ or $2$ modulo $4$. Examine the second component of $p(j)-p(i)$:
$$
\left( k\left\lfloor\frac j{2n}\right\rfloor +\left\lfloor \frac j4\right\rfloor \pmod k+k\right)
-\left(k\left\lfloor\frac i{2n}\right\rfloor +\left\lfloor \frac i4\right\rfloor \pmod k\right).
$$
This reduces to $k+\left(\left\lfloor \frac j4\right\rfloor-\left\lfloor \frac i4\right\rfloor\right) \pmod k$, which is not $0$. So $p(i) \neq p(j)$.

In the third case, the first components of $p(i)$ and $p(j)$ will be
the same only when they are equivalent $\pmod n$. But the possible
values for $r=\left\lfloor \frac i{2n}\right\rfloor$ never reach $n$, so we may
rule out this eventuality. Finally, in the fourth case, note that
the hypotheses imply that $p(i) = p(j)$ if and only if $s_i=\left\lfloor
\frac i4\right\rfloor \pmod k =s_j=\left\lfloor \frac j4\right\rfloor \pmod k$. But
$|i-j|<2n = 4k$, so $\left|\left\lfloor \frac i4\right\rfloor-\left\lfloor \frac
j4\right\rfloor\right|<k$, thus $s_1 \neq s_2$. Therefore we may conclude that
$p$ is a bijection.

Again, we rename the vertices of $C_n \square C_n$  by agreeing that
$p(i) = x_i$. The labeling is given by $c:\{x_0, x_1, \dots ,
x_{n^2-1}\} \to \mathbb Z_+$ by
\[
c(x_i)=
       \begin{cases}
          1+\frac i2(k+2), &\text{ when $i$ is even,}\\
          2+\frac{i-1}2(k+2), &\text{ when $i$ is odd.}
        \end{cases}
\]
As $c(x_{i+4})-c(x_i)> 2k= \diam(C_n \square
C_n)$ for all $i = 0, 1, \dots , n^2-5$, we again apply Remark
\ref{usefact} to limit the vertex pairs for which we must verify
that $c$ is a radio labeling. This verification consists of two subcases.

\emph{Subcase 1}: Consider first pairs of vertices
$\{x_i,x_j\}$ with $|i-j|\leq 3$ and $\left\lfloor \frac i{2n} \right\rfloor =
\left\lfloor \frac j{2n} \right\rfloor$. For $i_2 = i_1+4m$ and $j_2 =j_2+4m$.
where $m$ is an integer, we have
$d(x_{i_1},x_{j_1})=d(x_{i_2},x_{j_2})$ and
$|c(x_{i_1})-c(x_{j_1})|=|c(x_{i_2})-c(x_{j_2})|$, so consideration
of the pairs for which the distances and the label differences are
shown in the tables below suffices. The first table gives the distances between vertices; the second gives the label differences.
\newline
%Below follows Jason Rennie's trick for putting tables side-by-side.
\begin{minipage}{2.2in}
\begin{center}
\begin{tabular}{l||c|c|c|c|c}
      & $x_1$ & $x_2$ & $x_3$ & $x_4$ & $x_5$   \\ \hline\hline
$x_0$ & $2k$  & $k$   & $k$   &       &         \\ \hline

$x_1$ & &$k$   & $k$   &$2k-1$ &         \\ \hline

$x_2$ &       & & $2k$  & $k-1$ & $k+1$   \\ \hline

$x_3$ &       & &       & $k+1$  & $k-1$
\end{tabular}
\end{center}
  \end{minipage}
  \begin{minipage}{2.5in}
\begin{center}
\begin{tabular}{l||c|c|c|c|c}
      & $x_1$ & $x_2$ & $x_3$ & $x_4$ & $x_5$   \\ \hline\hline
$x_0$ & $2k$  & $k$   & $k$   &       &         \\ \hline

$x_1$ & &$k$   & $k$   &$2k-1$ &         \\ \hline

$x_2$ &       & & $2k$  & $k-1$ & $k+1$   \\ \hline

$x_3$ &       & &       & $k+1$  & $k+1$
\end{tabular}
\end{center}
  \end{minipage}

\noindent Summing the corresponding entries from each table shows that the radio condition is satisfied in all cases.

\emph{Subcase 2}: It remains only to verify that the radio condition
holds for vertices with index differences less than four and indices
near a multiple of $2n$. Specifically, we must calculate
$d(u,v)+|c(u)-c(v)|$ for all vertices $\{u,v\}$ of the form
$\{x_{an-3},\,x_{an}\}$, $\{x_{an-2},\,x_{an}\}$,
$\{x_{an-2},\,x_{an+1}\}$, $\{x_{an-1},\,x_{an}\}$,
$\{x_{an-1},\,x_{an+1}\}$, and $\{x_{an-1},\,x_{an+2}\}$, where $a$
is an even integer. Note that taking $a$ even gives $an=a2k\equiv
0\pmod 4$. Also, as $n\geq 2$, we know that
\[
r=\left\lfloor\frac i{2n}\right\rfloor=
       \begin{cases}
          \left\lfloor\frac{an-t}{2n}\right\rfloor=\frac a2-1, &\text{ for $i=an-t$ and $t=1,2,3$}\\
          \left\lfloor\frac{an+t}{2n}\right\rfloor=\frac a2, &\text{ for $i=an+t$ and $t=0,1,2$.}
        \end{cases}
\]
Also, as $s=\left\lfloor\frac i4\pmod k\right\rfloor$, we know
\[
s=       \begin{cases}
          \frac{ak}2-1\pmod k = k-1, &\text{for $i=an-t$ and $t=1,2,3$}\\
          \frac {ak}2\pmod k=0, &\text{for $i=an+t$ and $t=0,1,2$.}
        \end{cases}
\]
Accordingly, we calculate the position functions and the label values for each vertex of interest, providing the results in the table below.
\begin{center}
\begin{tabular}{c|c|c}
  % after \\: \hline or \cline{col1-col2} \cline{col3-col4} ...
  index $i$ & vertex $x_i$ & label $c(x_i)$ \\\hline\hline
  $an-3$ & $\left(\frac a2-1+k, k\left(\frac a2-1\right)-1\right)$ & $2+\frac{an-4}2(k+2)$ \\\hline
  $an-2$ & $\left(\frac a2-1,k\left(\frac a2-1\right)-1\right)$ & $1+\frac{an-2}2(k+2)$ \\\hline
  $an-1$ & $\left(\frac a2-1+k,\frac{ka}2-1\right)$ & $2+\frac{an-2}2(k+2)$ \\\hline
  $an$ & $\left(\frac a2,\frac{ka}2\right)$ & $1+\frac{an}2(k+2)$ \\\hline
  $an+1$ & $\left(\frac a2+k,\frac{ka}2+k\right)$ & $2+\frac{an}2(k+2)$ \\\hline
  $an+2$ & $\left(\frac a2,\frac{ka}2+k\right)$ & $1+\frac{an+2}2(k+2)$ \\
\end{tabular}
\end{center}

This allows the calculation of the distance between vertices and the
difference of the labels for each vertex pair in question.

\begin{center}
\begin{tabular}{c||c|c|c}
\ vertex pair & distance &  label difference &
distance plus label difference  \\ \hline\hline

$x_{an-3},\,x_{an}$   &   &  $2k+3$  & $>2k+3$ \\
\hline

$x_{an-2},\,x_{an}$   &  $k$  &  $k+2$  &  $2k+2$  \\ \hline

$x_{an-1},\,x_{an}$  &  $k$  &  $k+1$  &  $2k+3$
\\  \hline

$x_{an-2},\,x_{an+1}$  &  $k$  &  $k+3$  & $2k+3$
\\ \hline

$x_{an-1},\,x_{an+1}$  &  $k$  &  $k+2$  &  $2k+2$  \\
\hline

$x_{an-1},\,x_{an+2}$  &   &  $2k+3$  &  $>2k+3$  \\
\end{tabular}
\end{center}
The radio condition is satisfied in all cases.

Finally, we compute the span of this radio labeling:
$$span(c) = c(x_{n^2-1})=2+\frac{n^2-2}2(k+2).
$$
\end{proof}

The proof of Theorem \ref{thm:ubodd} has a similar structure to
that of Theorem \ref{thm:ubeven}, but the position and labeling
functions depend on the parity of $k$.

%************************************************************************************************
%****************************This is where the last upper bound theorem starts.******************
%************************************************************************************************

\begin{thm} \label{thm:ubodd}
Suppose $n=2k+1$. Then $rn(C_n \square C_n) \leq \frac{n^2-1}2(k+1)+1$.
\end{thm}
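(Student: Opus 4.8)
The plan is to mirror the structure of the proof of Theorem~\ref{thm:ubeven}: exhibit an explicit position function $p:\{0,1,\dots,n^2-1\}\to V(C_n\square C_n)$, show it is a bijection, use it to rename the vertices as $x_i=p(i)$, define a labeling $c$ with $c(x_0)<c(x_1)<\cdots<c(x_{n^2-1})$ whose span equals $\frac{n^2-1}2(k+1)+1$, and verify that $c$ satisfies the radio condition. By Remark~\ref{usefact}, once we check that $c(x_{i+m})-c(x_i)\ge\diam(C_n\square C_n)=2k$ for some small fixed step $m$ and all eligible $i$, it suffices to verify the radio condition only for pairs $\{x_i,x_j\}$ with $|i-j|<m$. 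Since Lemma~\ref{Gap lemma odd} gives a gap of $k+1$ between every second label, the labeling will assign consecutive labels differing alternately by $1$ and by $k$ (or a similar pattern summing to $k+1$ over two steps), so $c(x_{i+4})-c(x_i)=2(k+1)>2k$ and the step $m=4$ should again suffice.

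The first substantive step is choosing $p$. For $n=2k+1$ the cyclic structure no longer splits evenly via the "shift by $k$" trick used in the even case, and as the paragraph preceding the theorem warns, the correct $p$ will depend on the parity of $k$; so I would define $p$ by cases on $i\bmod 4$ (to interleave the four "sublattices") and on $k\bmod 2$, with the first coordinate controlled by $r=\lfloor i/(2n)\rfloor$ and the second coordinate by $kr$ plus a term $s=\lfloor i/4\rfloor\bmod k$ (or $\bmod\,n$), adjusted by additive shifts of roughly $k$ or $k+1$. The injectivity proof then follows the same four-case analysis as in Theorem~\ref{thm:ubeven}: distinguish whether $i\equiv j\pmod 4$ and whether $\lfloor i/(2n)\rfloor=\lfloor j/(2n)\rfloor$, and in each case show the first or second component of $p(i)-p(j)$ is nonzero modulo $n$, using $|i-j|<2n$ and the bound $\lfloor i/(2n)\rfloor\le\frac{n-3}2<k$ (for $n=2k+1$). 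Since the domain and codomain both have $n^2$ elements, injectivity gives bijectivity.

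Next I would define $c(x_i)$ piecewise by the parity of $i$, of the form $1+\frac i2(k+1)$ for $i$ even and $2+\frac{i-1}2(k+1)$ for $i$ odd, so that $c$ is strictly increasing, $c(x_{n^2-1})=1+\frac{n^2-1}2(k+1)$ as required, and $c(x_{i+4})-c(x_i)=2(k+1)>2k$. Applying Remark~\ref{usefact} reduces the radio-condition check to pairs with $|i-j|\le 3$, and as in the even case these split into two subcases: (i) pairs lying in a common "block" $\lfloor i/(2n)\rfloor=\lfloor j/(2n)\rfloor$, handled by computing, via Remark~\ref{distancecalc}, a small finite table of distances $d(x_i,x_j)$ and label differences $|c(x_i)-c(x_j)|$ for the representative indices $0\le i<j\le 5$ (this is where parity of $k$ re-enters the distance computations), and (ii) pairs straddling a multiple of $2n$, i.e.\ of the form $\{x_{an-t},x_{an+t'}\}$ with $1\le t\le 3$, $0\le t'\le 2$, handled by another short table. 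In each row one checks $d(x_i,x_j)+|c(x_i)-c(x_j)|\ge 2k+1=1+\diam(C_n\square C_n)$.

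The main obstacle is pinning down the position function $p$ (and its parity-of-$k$ cases) so that (a) it is genuinely a bijection and (b) the small-index distance tables come out favorably — in particular so that the three "tightest" pairs, those realizing the gap bound of Lemma~\ref{Gap lemma odd}, have combined distance-plus-label-difference exactly meeting $2k+1$ rather than falling short. Concretely, a pair $\{x_i,x_{i+2}\}$ with $c(x_{i+2})-c(x_i)=k+1$ needs $d(x_i,x_{i+2})\ge k$, so $p$ must be arranged so that vertices two apart in the ordering are genuinely far apart in $C_n\square C_n$; getting the additive shifts right while keeping injectivity is the delicate balancing act. Once $p$ and $c$ are correctly specified, the remaining verifications are routine finite calculations analogous to those in Theorem~\ref{thm:ubeven}, and the span computation $\operatorname{span}(c)=c(x_{n^2-1})=\frac{n^2-1}2(k+1)+1$ completes the proof.
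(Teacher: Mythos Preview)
Your proposal is a plan rather than a proof: you correctly identify the overall architecture (bijective position function, increasing labeling with span $\frac{n^2-1}{2}(k+1)+1$, reduction via Remark~\ref{usefact} to pairs with $|i-j|\le 3$, parity-of-$k$ split), but you explicitly leave the construction of $p$ open, and that construction is the entire content of the theorem. The specific shape you anticipate---cases on $i\bmod 4$ with $r=\lfloor i/(2n)\rfloor$ and $s=\lfloor i/4\rfloor\bmod k$, by direct analogy with Theorem~\ref{thm:ubeven}---does not adapt cleanly to odd $n$: the four ``sublattices'' obtained by shifting by $k$ in each coordinate no longer partition $V(C_n\square C_n)$ when $n=2k+1$, so the injectivity argument you sketch cannot go through as stated.

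The paper's constructions are quite different from what you outline. For $k$ odd the situation is actually \emph{simpler} than you expect: since $\frac{k+1}{2}$ is an integer, one can take the purely linear position function $p(i)=\bigl(ik,\ \lfloor i/n\rfloor+i\,\tfrac{k+1}{2}\bigr)$ and the linear labeling $c(x_i)=1+i\,\tfrac{k+1}{2}$ (constant consecutive gaps of $\tfrac{k+1}{2}$, not your alternating $1,k$ pattern); bijectivity follows from $\gcd(k,n)=1$ and the verification splits only on whether $\lfloor i/n\rfloor=\lfloor j/n\rfloor$. For $k$ even the construction is more intricate and of a different character: $p$ is defined \emph{recursively} on ``diagonals'' $D_s=\{(v,w):v-w\equiv s\}$, filling $D_0\cup D_1$ by repeatedly adding $(\tfrac{k}{2},\tfrac{k}{2})$, then shifting onto $D_2\cup D_3,\dots,D_{n-3}\cup D_{n-2}$ by adding $(k+2,k)$, and finally treating the last diagonal $D_{n-1}$ with a separate rule borrowed from Liu--Zhu's optimal labeling of $C_{2k+1}$. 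The labeling there does use your alternating $+1,+k$ pattern on the first $(n-1)n$ vertices, but switches to a variable pattern $+(2j{+}1),+(k{-}2j)$ on $D_{n-1}$. Your framework would not have discovered either the linear shortcut for odd $k$ or the need for special treatment of the final diagonal for even $k$.
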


\begin{proof}
For each of $k$ odd and $k$ even we provide a radio labeling with span $\frac{n^2-1}2(k+1)+1$.
\newline{\em Case 1: $k$ is odd.}
\newline
Define $p:\{0,1,\dots , n^2-1\} \to \{(v,w)\, |\, 0 \leq v,w \leq n-1\}$ by
$$p(i)=\left( ik, r+i\left(\frac{k+1}2 \right)\right), \text{ where }
r=\left\lfloor \frac in \right\rfloor.
$$
We wish to show that $p$ is a bijection. Suppose that $p(i)=p(j)$
for some $i\neq j$. Examining the first components of $p(i)$ and $p(j)$, we
see that $ik \pmod n=jk \pmod n$, i.e.,$ik-jk \equiv 0 \pmod n$. As $k$ and $n$ are relatively prime, $i-j\equiv 0 \pmod n$. The second components of $p(i)$ and $p(j)$
must also be equivalent (mod $n$): this gives
$$
\begin{aligned}
0 &\equiv \left(\left\lfloor \frac in \right\rfloor + i\left(\frac{k+1}2
\right) \right) - \left(\left\lfloor \frac jn \right\rfloor + j\left(\frac{k+1}2 \right) \right) \pmod n \\
&= \left\lfloor \frac in \right\rfloor - \left\lfloor \frac jn \right\rfloor
+(i-j)\left(\frac{k+1}2\right) \pmod n \\
&=\left\lfloor \frac in \right\rfloor - \left\lfloor \frac jn \right\rfloor \pmod n.
\end{aligned}
$$
But $i \neq j$ and $i \equiv j \pmod n$ imply $\left\lfloor \frac in
\right\rfloor - \left\lfloor \frac jn \right\rfloor \not \equiv 0 \pmod n$. Thus
$p(i)\neq p(j)$ for distinct $i,j$ in the domain of $p$, and we may
conclude that $p$ is a bijection.

 We now use the elements of the set
$\{x_0, x_1, \dots , x_{n^2-1}\}$ to rename the vertices of $C_n
\square C_n$  by agreeing that $p(i) = x_i$. Define the labeling
$c:\{x_0, x_1, \dots , x_{n^2-1}\} \to \mathbb Z_+$ by
$$c(x_i) = 1+i\left(\frac{k+1}2\right).
$$
\newline{\em Claim}: The labeling $c$ is a radio labeling of $rn(C_n \square C_n)$.
\newline
To establish our claim we must show that $c$ satisfies the radio condition
$$d(u,v)+|c(u)-c(v)|\geq \text{diam}(G)+1 = 2k+1
$$
for all distinct $u,v \in V(C_n \square C_n)$. Note that $c(x_{i+4})-c(x_i)
\geq \diam(G)=2k$ for all $i=0, \dots n^2-4$, so Remark
\ref{usefact} indicates that we need only verify that $c$ satisfies
the radio condition for vertex pairs $x_i, x_{i+j}$ with $j \leq
3$.

We will examine first pairs of vertices with fixed $r$, i.e., vertices with indices
in $\{an, an+1, \dots , an+n-1\}$ for $a = 0, 1, \dots n-1$.
Subsequently we will show that the radio condition is satisfied for
vertices of the form $x_i$, $x_{i+j}$ where $\left\lfloor \frac in
\right\rfloor \neq \left\lfloor \frac {i+j}n \right\rfloor$ and $j \leq 3$. We will handle the case $n=3$ ($k=1$) separately.

\emph{Subcase 1}: Take $x_i,x_{i+j} \in \{an, an+1, \dots ,
an+n-1\}$ for $a = 0, 1, \dots n-1$. Assume $k>1$.
The distance between $x_i$ and $x_{i+j}$ is given by examining the
position function $p$ and using Remark \ref{distancecalc}.

\begin{center}
\begin{tabular}{c||c|c|c}
vertex pair & $d(x_i,x_{i+j})$ &  $c(x_{i+j})-c(x_i)$ &
$d(x_i,x_{i+j}) + |c(x_j)-c(x_i)|$  \\ \hline\hline

$x_i,\,x_{i+1}$   & $k+\frac{k+1}2$  &  $\frac {k+1}2$  & $2k+1$ \\
\hline

$x_i,\,x_{i+2}$   &  $1+k$  &  $k+1$  &  $2k+2$  \\ \hline

$x_i,\,x_{i+3}$  &  $k-1+\frac {k-1}2$  &  $\frac{3k+3}2$  & $3k$
\\
\end{tabular}
\end{center}
Each sum in the last column is at least $2k+1$ (given $k>1$), so this completes the argument that the radio condition is
satisfied by $c$ for all vertex pairs specified in this subcase.

\emph{Subcase 2}: Consider $x_i$, $x_{i+j}$ with $j \leq 3$ and
$\left\lfloor \frac in \right\rfloor = \left\lfloor \frac {i+j}n \right\rfloor -1$, again assuming $k>1$.
Compare the calculation of $d(x_i,x_{i+j})$ here with the analogous
calculation in Subcase 1. The new condition introduced here, that
$\left\lfloor \frac in \right\rfloor = \left\lfloor \frac jn \right\rfloor -1$, may change
the distance by $\pm 1$ (as $r$ changes by 1 in the second component
of $p(i+j)$). The previous verification that the radio condition
holds thus suffices here for $(x_i,x_{i+2})$ and $(x_i,x_{i+3})$, as
the sum of the distance and the label difference exceeded $2k+1$. We
recalculate $d(x_i,x_{i+1})$:
$$\begin{aligned}
d(x_i,&x_{i+1})=d(x_{an-1},x_{an})\\
&=d\left(\left((an-1)k,(a-1)+(an-1)\frac{k+1}2\right),\left(ank,a+(an)\frac{k+1}2\right)\right)\\
&\equiv d\left(\left(-k,a-\frac{k+3}2\right),(0,a)\right) \\
&= k+\frac{k+3}2.
\end{aligned}
$$
The distance increases; the radio condition is satisfied.

The two subcases show that $c$ is a radio labeling of $C_n \square
C_n$ when $k>1$ ($n>3$). However, $c$ is also a radio labeling of $C_3
\square C_3$. To see this, let $n=3$. Recall $\diam(C_3 \square C_3) = 2$. Note that $p(i)$ and
$p(i+1)$ differ in both components for all $i = 0, 1, \dots , 8$, thus $d(x_i,x_{i+1}) \geq 2$. When $|j-i|\geq 2$, we have $|c(x_j)-c(x_i)| \geq 2$. These two facts ensure
that the radio condition is satisfied by all pairs of vertices of
$C_3 \square C_3$.

This establishes the claim that $c$ is a radio labeling of $C_n \square C_n$ (when $n = 2k+1$ and $k$ is odd). To calculate the span of $c$, we use the fact that $c$ is an increasing function to note that
$$
span(c) = c(x_{n^2-1})=1+(n^2-1)\left(\frac{k+1}2\right) =\left(\frac{n^2-1}2\right)(k+1)+1.
$$
\newline{\em Case 2: $k$ is even.}
\newline
As $C_1 \square C_1$ has only one vertex, we label this vertex 1;
the result follows.

Define $D_i$, the $i$th ``diagonal" of $C_n \square C_n$, to be the set of all vertices $\{(v,w)\,|\,v-w\equiv i \pmod n\}$. We define the position function $p$ onto the vertices of diagonals $D_0, D_1, \dots, D_{n-2}$ first. Define $p(0)= (0,0)$ and $p(1)=(k+1,k)$. Next define
\[
p(i)=\begin{cases}
p(i-2)+\left(\frac k2, \frac k2\right), &i = 2, 3, \dots , 2n-1,\\
p(i-2n)+(k+2,k), &i=2n, 2n+1, \dots , (n-1)n.
\end{cases}
\]
For $i = (n-1)n, (n-1)n+1, \dots , n^2-2$, write $i=(n-1)n+4j+r$, where $r \in \{0,1,2,3\}$. The continuation of the definition of the position function maps these index values to vertices on diagonal $D_{n-1}$:
\[
p(i+1)=
       \begin{cases}
         p(i)+(k-j,k-j), &r = 0, 2,\\
         p(i)+(\frac k2+1+j, \frac k2+1+j), &r=1, 3.
        \end{cases}
\]

\noindent{\em Claim}: $p$ is a bijection.

Consider $\{p(i)\,|\,i=0, 1, \dots , 2n-1\}$. Note $p(0)\in D_0$ and $p(1)\in D_1$. As adding $\left(\frac k2, \frac k2\right)$ to a vertex on $D_i$ yields a vertex on $D_i$, $\{p(i)\,|\,i=0, 1, \dots , 2n-1\}\subset D_0 \cup D_1$. Furthermore, as $\frac k2$ and $n=2k+1$ are relatively prime, $p(i)\neq p(j)$ for $1 \leq i < j \leq 2n-1$. Thus $p|_{i=0, 1, \dots , 2n-1}$ is a bijection onto $D_0\cup D_1$.

Next, observe that $\{p(i)\,|\,i=2n, 2n+1, \dots , 4n-1\}$ shifts $\{p(i)\,|\,i=0, 1, \dots , 2n-1\}$ onto $D_2 \cup D_3$ by adding $(k+2,k)$ to each vertex. Similarly, the assignments $\{p(i)\,|\,i=4n, 4n+1, \dots , 6n-1\}$ are then onto $D_4 \cup D_5$, etc. Thus $p|_{i=0, 1, \dots , (n-1)n-1}$ is a bijection onto the vertices of diagonals $D_0, D_1, \dots , D_{n-2}$.

Finally, consider $p|_{i=(n-1)n, \dots , n^2-1}$. As in the preceding paragraph, $p((n-1)n)$ is a shift of $p((n-3)n)$ onto diagonal $D_{n-1}$. Adding $(\star,\star)$ (where $\star$ is $k-j$ or $\frac k2 + 1 + j$) then ensures that $p(i) \in D_{n-1}$ for $i = (n-1)n+1, \dots , n^2-1$. As $D_{n-1} = \{(0,1), (1,2), \dots , (n-1,0)\}$, the fact that $p|_{i=(n-1)n, \dots , n^2-1}$ is a bijection may be established by considering only the first components of $\{p(i)\,|\,i=(n-1)n, \dots , n^2-1\}$. Yet this is exactly the function $\tau:\{0, 1, \dots , n-1\}\to\{0, 1, \dots , n-1\}$ used by Liu and Zhu to specify an optimal radio labeling of $C_{2k+1}$ for $k$ even \cite{LiuZhu}. Accordingly, the proof that $\tau$ is a permutation also verifies that our adaptation, $p|_{i=(n-1)n, \dots , n^2-1}$, is a bijection onto $D_{n-1}$. Thus $p$ is a bijection onto the vertices of $C_n \square C_n$.

With the claim established, we may rename the vertices of $C_n \square C_n$ by specifying that $p(i) = x_i$. We then define a labeling function $c:V(C_n \square C_n) \rightarrow  \mathbb Z_+$, using one definition for the vertices on diagonals $D_0$, $D_1$, $\dots , D_{n-2}$ and a second for those on $D_{n-1}$. Define $c(x_0) = 1$, and for $i = 0, 1, \dots , (n-1)n-1$, define
\[
c(x_{i+1})=\begin{cases}
         c(x_i)+1, & \text{$i$ even},\\
         c(x_i)+k, & \text{$i$ odd}.
         \end{cases}
\]
For the last $n-1$ vertices, we again write $i=(n-1)n+4j+r$ where $r \in \{0,1,2,3\}$ (for $i=(n-1)n, \dots , n^2-2)$, and use this decomposition to define
\[
c(x_{i+1})=\begin{cases}
         c(x_{i})+2j+1, & r=0, 2,\\
         c(x_{i})+k-2j, & r=1, 3.
         \end{cases}
\]

\noindent{\em Claim}: The labeling $c$ is a radio labeling of $C_n\square C_n$.

 Again we will show that $c$ satisfies the radio condition for pairs of distinct vertices. Applying Remark \ref{usefact} shows that, for vertex pairs on diagonals $D_0$ through $D_{n-2}$, we need only verify that $c$ satisfies the radio condition for vertex pairs $(x_i, x_{i-j})$ with $j \leq 2$ when $i$ is even and $j\leq 3$ when $i$ is odd. On diagonal $D_{n-1}$ it suffices to show $c$ satisfies the radio condition for vertex pairs $(x_i, x_{i-j})$ with $j \leq 3$. We break this verification into subcases.

{\em Subcase} 1: Say $x_i, x_{i-j} \in D_s\cup D_{s+1}$ for some even $s$. \newline
Using Remark \ref{distancecalc} to calculate distances, we see
\[
d(x_i, x_{i-1}) = \begin{cases}
                   2k,& \text{$i$ odd},\\
                   k+1,& \text{$i$ even},
                  \end{cases}
\]
for $i \in \{1, 2, \dots, 2n-1\}$, and $d(x_i, x_{i-2})=k$ for $i \in \{2,3, \dots, 2n-1\}$. For $i$ odd, $i \in \{3, 4, \dots, 2n-1\}$, we have $d(x_i, x_{i-3})=k+1$. We summarize these distance calculations together with the corresponding label difference calculations below.
\begin{center}
\begin{tabular}{c||c|c|c}
  %\hline
  % after \\: \hline or \cline{col1-col2} \cline{col3-col4} ...
  vertex pair &  distance &  label difference & distance + label diff.  \\\hline\hline
  $x_i,x_{i-1}$ ($i$ even)  & $k+1$ & $k$ & $2k+1$ \\\hline
  $x_i,x_{i-1}$  ($i$ odd) & $2k$ & $1$ & $2k+1$ \\\hline
  $x_i,x_{i-2}$  & $k$ & $k+1$ & $2k+1$  \\\hline
  $x_i,x_{i-3}$  ($i$ odd) & $k+1$ & $k+2$ & $2k+3$  \\
  \hline
\end{tabular}
\end{center}

\noindent As the sum in the last column is at least $2k+1$ for each vertex pair, $c$ satisfies the radio condition for these vertex pairs. The extension of the position function in Part 2 of its definition via a constant shift ensures that $c$ also satisfies the radio condition for any vertex pair on diagonals $D_s\cup D_{s+1}$ for $s$ even.

{\em Subcase} 2: Consider $x_i$ and $x_{i+j}$ where $x_i \in D_s\cup D_{s+1}$ and $x_{i+j} \in D_{s+2} \cup D_{s+3}$ for some even $s$. \newline
We examine the sum of vertex distance and label difference for the vertex pairs $(x_{2n},x_{2n-1})$, $(x_{2n},x_{2n-2})$, $(x_{2n+1},x_{2n-1})$, and $(x_{2n+1},x_{2n-2})$. Again taking advantage of the shift employed in the position function's definition, these sums then extend to all vertex pairs under consideration in this case.

To aid in calculating distances, we specify the vertices using the original notation: $x_{2n}=(k+2,k)$; $x_{2n-1}=\left(\frac k2 + 1, \frac k2\right)$; $x_{2n-2}= \left(\frac {3k}2 + 1,\frac {3k}2 + 1\right)$; $x_{2n+1}=(2,2k)$.
The next table gives distances between vertices and label differences. To help the reader follow the distance calculations, we give the contribution of the row indices to the sum first, followed by the contribution of the column indices, and then simplify. (See Remark \ref{distancecalc}.)

\begin{center}
\begin{tabular}{c||c|c}
  %\hline
  % after \\: \hline or \cline{col1-col2} \cline{col3-col4} ...
  vertex pair &  distance &  label difference  \\\hline \hline
  $x_{2n},x_{2n-1}$ & $\left(\frac k2+1\right)+\frac k2 = k+1$ & $k$ \\\hline
  $x_{2n},x_{2n-2}$ & $\left(\frac k2-1\right)+\left(\frac k2+1\right)=k$ & $k+1$  \\\hline
  $x_{2n+1},x_{2n-1}$ & $\left(\frac k2-1\right)+\left(\frac k2+1\right)=k$ & $k+1$  \\\hline
  $x_{2n+1},x_{2n-2}$ & $\left(\frac k2+2\right)+\left(\frac k2-1\right)=k+1$ & $k+2$ \\
  %\hline
\end{tabular}
\end{center}
As the sum of the distance between each pair plus the absolute difference of their label values always exceeds $2k$, the labeling $c$ satisfies the radio condition for these vertex pairs.

{\em Subcase} 3: We consider here the two vertex pairs $(x_{(n-1)n-1},x_{(n-1)n+1})$ and $(x_{(n-1)n-2},x_{(n-1)n+1})$. (These vertex pairs have their first vertex in $D_{n-2}$ and their second in $D_{n-1}$. As the first vertex in $D_{n-1}$, $x_{(n-1)n}$, follows the labeling pattern of the first vertex in all evenly indexed diagonals, we do not need to consider any pair containing it here.) The pair notation for the vertices under consideration is $x_{(n-1)n-2}=(2k-1, 1)$; $x_{(n-1)n-1}=(k-1,k+1)$, and $x_{(n-1)n+1}=(\frac k2 -1, \frac k2)$. This gives the values in the following table.
\begin{center}
\begin{tabular}{c||c|c}
  %\hline
  % after \\: \hline or \cline{col1-col2} \cline{col3-col4} ...
  vertex pair & distance & label difference  \\ \hline\hline
  $x_{(n-1)n-1},x_{(n-1)n+1}$ & $\frac k2+\left(\frac k2+1\right)=k+1$ & $k+1$ \\ \hline
  $x_{(n-1)n-2},x_{(n-1)n+1}$ & $\left(\frac k2+1\right)+\left(\frac k2-1\right)=k$ & $k+2$  \\
  %\hline
\end{tabular}
\end{center}
Both distance plus label difference sums are at least $2k+1$, as required.

{\em Subcase} 4: Say $x_i, x_{i+l} \in D_{n-1}$, with $l \leq 3$. \newline
Recall that to define $c$ for these vertices, we write $i = (n-1)n+4j+r$ for $r\in \{0,1,2,3\}$. The table below giving the results of the necessary calculations for each vertex pair follows from the definitions of $p$ and $c$.

\begin{center}
\begin{tabular}{c||c|c}
  %\hline
  % after \\: \hline or \cline{col1-col2} \cline{col3-col4} ...
  vertex pair(s) & distance & label difference  \\ \hline\hline
  $x_{4j},x_{4j+1}$; $x_{4j+2},x_{4j+3}$ & $2k-2j$ & $1+2j$ \\ \hline
  $x_{4j},x_{4j+2}$; $x_{4j+1},x_{4j+3}$; $x_{4j+2},x_{4(j+1)}$ & $k$ & $1+k$\\\hline
  $x_{4j},x_{4j+3}$ & $k-2j$ & $k+2+2j$ \\ \hline
  $x_{4j+1},x_{4j+2}$; $x_{4j+3},x_{4(j+1)}$ & $k+2+2j$ & $k-2j$  \\ \hline
  $x_{4j+1},x_{4(j+1)}$; $x_{4j+3},x_{4(j+1)+2}$ & $2+2j$ & $2k+1-2j$ \\ \hline
  $x_{4j+3},x_{4(j+1)+1}$ & $k+2$ & $k+3$ \\
\end{tabular}
\end{center}

All sums of the last two columns (distance plus label difference) are at least $2k+1$, so once again we see that $c$ satisfies the radio condition for these vertex pairs. This proves the claim.
\vskip 12pt

Having established that $c$ is a radio labeling on $C_n \square C_n$, it remains only to compute the span of $c$ to obtain an upper bound for $rn(C_n \square C_n)$. We take advantage of the fact that $c(x_{i+2}) = c(x_i)+k+1$ for all even $i$: this gives $c(x_{n^2-1}) = c(x_0)+\frac{n^2-1}2(k+1)$. As $c(x_{n^2-1})=span(c)$, we conclude that $rn(C_n\square C_n) \leq \frac{n^2-1}2(k+1)+1$.

\end{proof}

As each upper bound for the radio number of the Cartesian product of
a cycle with itself is equal to the corresponding lower bound, we
have the radio numbers themselves. That is, Theorems \ref{lowerbound2k}, \ref{lowerbound2k+1}, \ref{thm:ubodd}, and \ref{thm:ubeven} establish our main theorem, Theorem \ref{thmM:main}.
%\begin{thmM} Let $k$ be a nonnegative integer. Then $rn(C_{2k} \square C_{2k})=2k^3+4k^2-k$ and $rn(C_{2k+1} \square C_{2k+1})=2k^3+4k^2+2k+1$.
%\end{thmM}

\section{Additional Comments}

It would be interesting to determine the general relationship
between the radio number of a Cartesian product of graphs and the
radio numbers of the factor graphs (i.e.,between $rn(G\square H)$,
$rn(G)$, and $rn(H)$). For instance, $rn(K_m)=m$ and $rn(K_m \square
K_n) = mn = |V(K_m \square K_n)|$. While this might lead to the hope
that $rn(G\square H) = rn(G)rn(H)$, the result in this paper
together with Liu and Zhu's result on the radio number of cycles
\cite{LiuZhu} shows that this is not the case:
$$\begin{aligned}
rn(C_{2k}) &= k^2+k+2\text{, and} \\
rn(C_{2k+1}) &=
       \begin{cases}
          k^2+k+1, &\text{ when $k$ is even,}\\
          k^2+2k+1, &\text{ when $k$ is odd,}
        \end{cases}
\end{aligned}
$$
whereas
$$\begin{aligned}
rn(C_{2k}\square C_{2k}) &= \bigl(\frac{(2k)^2-2}2\bigr)(k+2)+2\text{, and} \\
rn(C_{2k+1} \square C_{2k+1})&=\bigl(\frac{(2k+1)^2-1}2\bigr)(k+1)+1.
\end{aligned}
$$
From this, we see that $rn(C_n \square C_n)$ is markedly less than
$rn(C_n)^2$.  At this point, not enough is known about radio
numbers of Cartesian products to venture a conjecture regarding the
relationship of $rn(G\square H)$ to $rn(G)$ and $rn(H)$.

\section{Acknowledgements}
This research was initiated under the auspices of an MAA (SUMMA)
Research Experience for Undergraduates program funded by NSA, NSF,
and Moody's, and hosted at CSU Channel Islands during Summer 2006.
The fourth author was partially supported via NSF Grant DMS 0302456,
through the generosity of Dr. Daphne Der-Fen Liu. We are grateful to
all for the opportunities provided. We also thank an anonymous referee for careful reading and helpful suggestions.

\end{document}